\newtheorem{theorem}{Theorem}[section]
\newtheorem{lemma}[theorem]{Lemma}
\newtheorem{prop}[theorem]{Proposition}
\newtheorem{coro}[theorem]{Corollary}
\theoremstyle{definition}
\newtheorem{example}[theorem]{Example}
\theoremstyle{remark}
\newtheorem{remark}[theorem]{Remark}
\numberwithin{equation}{section}
\DeclareMathOperator{\Aut}{Aut}
\DeclareMathOperator{\Tr}{Tr}
\DeclareMathOperator{\Gal}{Gal}
\DeclareMathOperator{\trace}{Trace}
\DeclareMathOperator{\range}{range}
\DeclareMathOperator{\rank}{rank}
\DeclareMathOperator{\conv}{conv}
\newcommand{\Sym}{{\mathcal S}}
\newcommand{\C}{{\mathbb C}}
\newcommand{\eLL}{{\mathcal L}}
\newcommand{\N}{{\mathbb N}}
\newcommand{\Q}{{\mathbb Q}}
\newcommand{\R}{{\mathbb R}}
\newcommand{\Z}{{\mathbb Z}}
\newtheorem{thm}{Theorem}
\newtheorem{predefn}[thm]{Definition}
\newtheorem{prerem}[thm]{Remark}
\newtheorem{preexample}[thm]{Example}
\title{Facial reduction for exact polynomial sum of squares decompositions}
\author{Santiago Laplagne}
\address{Departamento de Matem\'atica, FCEN, Universidad de Buenos Aires - Ciudad Universitaria,
Pabell\'on I - (C1428EGA) - Buenos Aires, Argentina}
\email{slaplagn@dm.uba.ar}
\begin{document}
\begin{abstract}
We study the problem of decomposing a non-negative polynomial as an exact sum of squares (SOS) in the case where
the associated semidefinite program is feasible but not strictly feasible (for example if the polynomial has real zeros). Computing symbolically roots of the original polynomial and applying facial reduction techniques, we can solve the problem algebraically or restrict to a subspace where the problem becomes strictly feasible and a numerical approximation can be rounded to an exact solution.

As an application, we study the problem of determining when can a rational polynomial that is a sum of squares of polynomials with real coefficients be written as sum of squares of polynomials with rational coefficients, and answer this question for some previously unknown cases. We first prove that if $f$ is the sum of two squares with coefficients in an algebraic extension of $\Q$ of odd degree, then it can always be decomposed as a rational SOS. For the case of more than two polynomials we provide an example of an irreducible polynomial that is the sum of three squares with coefficients in $\Q(\sqrt[3]{2})$ that cannot be decomposed as a rational SOS.
\end{abstract}
\maketitle

\section{Introduction} \label{secIntro}
Decomposing a non-negative polynomial on $\R^n$ as a sum of squares is a classical problem.
In 1888, D. Hilbert proved in a famous paper \cite{Hilbert} that every non-negative polynomial in $n$ variables and even degree $d$ can be represented as a sum of squares of other polynomials if and only if either (a) $n = 1$ or (b) $d = 2$ or (c) $n = 2$ and $d = 4$. The first explicit example of a non-negative polynomial that cannot be written as a sum of squares was found by T. Motzkin \cite{Motzkin} in 1967. His example is $f = x^4y^2 + x^2y^4 - 3x^2y^2 + 1$.

Recently, the algorithmic problem of determining whether a real polynomial is decomposable as a sum of squares, and if so to compute such a decomposition, has become an active area of research as new tools have been developed to tackle this problem. It provides a certificate of non-negativity and has therefore applications in different areas of mathematics. The strategy usually applied is to pose it as a semidefinite programming problem, a class of convex optimization problems which can be efficiently solved by numerical methods. In this setting, computing a sum of squares (SOS) decomposition is equivalent to finding a positive semidefinite matrix in a space of symmetric matrices.

Using available software packages one can compute a numerical approximation to a solution, when it exists.  In theoretical and practical applications, it is often desirable to obtain exact rational or algebraic solutions. In \cite{parrilo}, H. Peyrl and P. Parrilo study how to construct a rational solution from a numerical approximation. Their method is applicable when the associated semidefinite program is strictly feasible, that is, if it admits a positive definite solution. In that case, they can round the solution to a sufficiently close rational point that preserves the positiveness of the eigenvalues of the associated matrix.

That strategy fails when the SDP is feasible but not strictly feasible. In the presence of null eigenvalues there is no direct way to round the numerical solution assuring that the eigenvalues close to zero become exact zeros. In some cases, for example if the original polynomial has real roots, a positive definite solution cannot exist and hence their method cannot be applied. In our work, using as a starting point a suggestion in that paper, we extend their algorithm reducing the dimension of the underlying search space, a technique called facial reduction. Combining the restrictions imposed by the real zeros with other restrictions given by the properties of the solution we are looking for, we are able to solve the problem in cases where the previous algorithm could not succeed.

As an application, we study the problem of determining when can a rational polynomial that is a sum of squares of polynomials with coefficients in an algebraic extension $K$ of $\Q$ be written as sum of squares of polynomials with rational coefficients. This question was originally raised by B. Sturmfels. It is a natural question in the context of polynomial optimization where one is interested in knowing in which cases numerical solutions can be rounded to provide exact certificates.

A first general result for this problem was given by C. Hillar \cite{hillar}, who showed that the question has a positive answer when $K$ is totally real. In \cite{scheiderer}, C. Scheiderer gave the first family of counterexamples of polynomials that are the sum of squares of polynomials with real coefficients that cannot be decomposed as rational SOS. The examples in his work are obtained by multiplying pairs of conjugate linear forms and are hence always sum of two squares over an extension $K$ of $\Q$ of even degree. In Section 5 of the referred paper, he posed some open questions. We are interested in Question 5.3: let $K$ be a number field of odd degree, and assume that a form $f$ is a sum of squares over $K$. Then is $f$ a sum of squares over $\Q$?
In Section \ref{section:sumoftwo} we prove that this is indeed the case if $f$ is the sum of two squares. In Section \ref{section:counterexample} we provide a negative answer to this question in the general case, providing an explicit polynomial that is the sum of three squares of polynomials with coefficients in an extension of $\Q$ if degree 3 that cannot be written as a sum or rational squares. The counterexample we construct gives also a negative answer to Question 5.1: Are there examples that are irreducible over $\C$? The examples Scheiderer constructs are products of linear forms and this simple structure allows him to prove his results. For the case of irreducible polynomials there were no tools available to tackle the problem, and this is what made the question relevant.

For the implementation of our algorithms, we need to be able to work symbolically with roots of systems of polynomials. For that purpose we use Maple software \cite{Maple} which can work efficiently in algebraic extension of the rational numbers. Linking Maple with Matlab \cite{Matlab} and SEDUMI \cite{SEDUMI} (a software for optimization over symmetric cones) we combine symbolic and numerical methods to get a very general and easy to use software package which we call \texttt{rationalSOS}.

The main contributions in this paper are: algorithms to write a non-negative polynomial with real roots as an exact sum of squares (Section \ref{section:facialreduction}) together with a Maple package implementation of the algorithms; a proof that all rational polynomials that can be written as sum of \emph{two} polynomials with coefficients in an algebraic extension of $\Q$ of odd degree can be written as a sum of squares of polynomials with rational coefficients (Section \ref{section:sumoftwo}); and new examples of non-negative polynomials with rational coefficients that are sum of squares of polynomials with non-rational coefficients that cannot be written as sum of squares of polynomials with rational coefficients (Section \ref{section:counterexample}).

\section{Preliminaries and notation}
\subsection{Sum of squares (SOS) decomposition }
Let $K[x] = K[x_1, \dots, x_n]$ be the ring of polynomials with coefficients in a field $\Q \subseteq K \subseteq \R$. We use the multi-index $\alpha \in \Z^n_+$ to denote the monomial $x^\alpha = x_1^{\alpha_1} \dots x_n^{\alpha_n}$ and define the degree of $x^\alpha$ to be $|\alpha| = \alpha_1 + \dots + \alpha_n$.  A polynomial $f \in K[x_1, \dots, x_n]$ that is non-negative over $\R^n$ can always be homogenized adding a new variable without modifying the non-negativity, and any SOS decomposition for one case can be easily modified to the other case, therefore in this paper we will always work with homogeneous polynomials, which are usually called forms.

For a polynomial $f \in K[x]$, we note $f_\alpha$ the coefficient of $x^\alpha$ in $f$. Let $M_d$ denote the set of exponents of monomials of degree $d$: $M_d = \{\alpha \in \Z^n_{\ge 0}: |\alpha| = d\}$. We note $m_d := |M_d|$, or simply $m$ when $d$ is clear, the number of monomials of degree $d$.

We say that a non-negative $f \in K[x]$ admits a SOS (sum of squares) decomposition if it can be written as $f = \sum_{i=1}^r p_i^2$ for some $p_i \in \R[x]$, $1 \le i \le r$, and we say that $f$ admits a rational SOS decomposition if we can take $p_i \in \Q[x]$, $1 \le i \le r$. This is equivalent to the existence of  $c_i > 0 \in \Q$ and $p_i \in \Q[x]$, $1 \le i \le r$, such that
\[
f = \sum_{i=1}^r c_i p_i^2,
\]
because any rational number $p / q$ can be written as the sum of $pq$ times $(1/q)^2$ (it can however change the number of polynomials required for such decomposition). We will usually work with the latter decomposition in this paper, which is more consistent with the matrix factorization tools we will use.

\begin{remark}
Let $C(f) := \conv(\{\alpha \mid f_\alpha \neq 0\})$ be the convex hull of the exponents set of $f$. B. Reznick proved in \cite{Reznick} that only monomials with exponents contained in $\frac{1}{2} C(f)$ can appear in a SOS decomposition. In particular, if $f$ is homogeneous of degree $2d$, then the polynomials $p_i$, $1 \le i \le r$, in the SOS decomposition must be homogeneous polynomials of degree $d$.
\end{remark}

\subsection{The semidefinite programming (SDP) formulation}
\label{sub:sdp}
Let $\Sym^m \subset \R^{m \times m}$ be the space of symmetric matrices. A matrix $A \in \Sym^m$ is called \emph{positive semidefinite}  if $v^t A v \ge 0$, $\forall v \in \R^m$, and it is called \emph{positive definite} if $v^t A v > 0$, $\forall v \in \R^m \setminus \{0\}$.  We call $\Sym^m_0$ and $\Sym^m_+$ the subsets of $\Sym^m$ of positive semidefinite and positive definite matrices respectively.

Starting with a form $f \in \R[x_1, \dots, x_n]$ of degree $2d$ we first recall how to pose the problem of computing a SOS decomposition as a semidefinite programming problem. We express the given polynomial as a quadratic form
\begin{equation}
\label{SDP}
f(x) = v(x)^t Q v(x),
\end{equation}
where $v(x)$ is the vector of all monomials of degree $d$, i.e., $v_\alpha(x) =x^\alpha$, $\alpha \in M_d$, and $Q \in \Sym^{M_d}$ (the set of symmetric matrices with rows and columns indexed by the elements in $M_d$).
Since the components of $v(x)$ satisfy algebraic relations, $Q$ is in general not unique and formula (\ref{SDP}) gives a set of linear equations for the entries of $Q$. Hence the set of all matrices $Q$ for which (\ref{SDP}) holds is an affine subspace of the set of symmetric matrices.
We denote this affine subspace by
\[
 \eLL := \{Q \in \Sym^{M_d} | f(x) = v(x)^t Q v(x)\},
\]
and $\eLL_\Q := \eLL \cap \Q^{m_d \times m_d}$.
The connection between SOS decompositions and the space $\eLL$ is given by the following lemma (see \cite[p.106]{choi2} and \cite{parrilo}):
\begin{lemma}
\label{SOS-SDP}
Let $f(x) \in \R[x]$. Then $f(x)$ admits a SOS decomposition if and only if there exists a positive semidefinite matrix $Q \in \eLL$.
Moreover, $f(x)$ admits a rational SOS decomposition if and only if there exists a positive semidefinite matrix $Q \in \eLL_\Q$.
\end{lemma}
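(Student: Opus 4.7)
The plan is to prove both directions via the Gram matrix correspondence between SOS decompositions and positive semidefinite matrices. For the forward direction, suppose $f = \sum_{i=1}^r p_i^2$ with $p_i \in \R[x]$. By Reznick's result in the preceding remark each $p_i$ can be taken homogeneous of degree $d$, so I can write $p_i(x) = a_i^t v(x)$ for a coefficient vector $a_i \in \R^{m_d}$. Then
\[
f(x) = \sum_{i=1}^r (a_i^t v(x))^2 = v(x)^t \Bigl(\sum_{i=1}^r a_i a_i^t\Bigr) v(x),
\]
so $Q := \sum_i a_i a_i^t$ lies in $\eLL$ and is positive semidefinite as a sum of rank-one PSD matrices. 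In the rational-coefficient setting I start instead from $f = \sum c_i p_i^2$ with $c_i \in \Q_{>0}$ and $p_i \in \Q[x]$, and the same construction with $Q = \sum c_i a_i a_i^t$ produces $Q \in \eLL_\Q$.

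For the converse over $\R$, given any PSD $Q \in \eLL$ of rank $r$, I would use the spectral theorem (or equivalently a Cholesky-type factorization) to write $Q = \sum_{i=1}^r a_i a_i^t$ with $a_i \in \R^{m_d}$. Setting $p_i(x) := a_i^t v(x)$ and plugging back into $f(x) = v(x)^t Q v(x)$ yields $f = \sum_{i=1}^r p_i^2$, as desired.

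The subtle point is the rational converse: given $Q \in \eLL_\Q$ positive semidefinite, I must exhibit a decomposition $Q = \sum c_i a_i a_i^t$ with $c_i \in \Q_{>0}$ and $a_i \in \Q^{m_d}$, even though the eigenvalues of $Q$ need not be rational. I would do this by a symmetric $LDL^t$ factorization computed by Gaussian elimination entirely over $\Q$: after a rational permutation $P$, one obtains $P Q P^t = L D L^t$ where $L$ is lower unitriangular with rational entries and $D$ is diagonal. Positivity of $Q$ forces the diagonal entries $d_i$ of $D$ to be non-negative rationals, and columns corresponding to zero pivots can simply be discarded. Writing the resulting factorization as $Q = \sum_{d_i > 0} d_i \, a_i a_i^t$ with $a_i \in \Q^{m_d}$, and applying the same substitution $p_i = a_i^t v$, gives $f = \sum d_i p_i^2$, a rational SOS decomposition.

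The main obstacle is exactly this rational PSD factorization step: over $\R$ any spectral argument works, but over $\Q$ one must invoke a rational symmetric Gaussian elimination and verify that positive semidefiniteness is preserved and detected on the rational diagonal pivots. Once this rational analog of Cholesky is in hand, both directions combine into the stated equivalence.
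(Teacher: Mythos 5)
Your proof is correct and is exactly the standard Gram-matrix argument that the paper delegates to its citations (Choi--Lam--Reznick and Peyrl--Parrilo): the only delicate point, the rational $LDL^t$ factorization of a positive semidefinite $Q\in\eLL_\Q$ with non-negative rational pivots, is handled properly (zero pivots force the corresponding row and column to vanish, so symmetric pivoting over $\Q$ suffices), and it is the same factorization the paper itself uses in Algorithm \ref{mainAlgorithm} to extract the polynomials $p_i$. No gaps.
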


By this lemma, we can pose the problem as a semidefinite program (SDP), a class of convex optimization problems. The advantage is that SDPs can be efficiently solved by interior point methods, which makes SOS problems computationally tractable. There are several numerical solvers that can compute effectively a positive semidefinite matrix in $\eLL$. We use SEDUMI \cite{SEDUMI} in our implementation.

\begin{example}
\label{example:3x3}
Let $f(x,y) = 10x^4+2x^3y+27x^2y^2-24xy^3+5y^4 \in \R[x,y]$. Solving Equation (\ref{SDP}) we get
\[
f(x,y)=
\begin{pmatrix}
x^2 & xy & y^2
\end{pmatrix}
\begin{pmatrix}
10 & 1 & a \\
1 & -2a + 27 & -12 \\
a & -12 & 5 \\
\end{pmatrix}
\begin{pmatrix}
x^2 \\
xy \\
y^2 \\
\end{pmatrix}
\]
for any $a \in \R$. Hence $\eLL = \{W(a) : a \in \R$\}, with $W(a)$ the $3 \times 3$ matrix in the identity above. There existes a SOS decomposition of $f$ if and only if there exists $a \in \R$ such that $W(a)$ is positive semidefinite.
\end{example}


H. Peyrl and P. Parrilo \cite{parrilo} studied the problem of computing exact rational SOS decompositions.
If the SDP problem is strictly feasible, that is, if it admits a positive definite solution, a numerical solver that maximizes the value of the minimum eigenvalue will compute a positive definite solution matrix if it exists.
In that case, one expects that the approximation computed can be rounded to a close enough rational solution that does not change the positivity of the eigenvalues.
 If the problem is feasible but not strictly feasible, that is, it admits a positive semidefinite matrix solution but not a positive definite one, then rounding the approximate solution is likely to introduce negative eigenvalues.

In our work, to tackle this problem, we add linear restrictions to the search space in order to reduce the dimension. We expect to find a space where the problem becomes strictly feasible and an approximate solution can be rounded to an exact solution. For simplicity, we will work with linear affine parametrizations of the resulting subspaces (also called \emph{image representations} in \cite[Section 3.2]{parrilo}). For example, matrix $W(a)$ in Example \ref{example:3x3} is a parametrization of $\eLL$.


For any parametrization $W(a_1, \dots, a_s)$ of $\eLL$ or a linear subspace, we can compute its rank as a matrix in $\R[a_1, \dots, a_s]^{m_d \times m_d}$. This rank is independent of the parametrization chosen and it is equal to the maximum rank over all possible specializations of the parameters. For short, we will from now on refer to the rank and kernel of a parametrization of $\eLL$ (or a subspace) as the rank and kernel of $\eLL$ (or the subspace). The set of values of the parameters for which the maximum rank is attained is open and dense in the space of all possible values, hence by giving random values to the parameters we can almost always compute the correct rank. This is more efficient than computing the rank of the matrix in $\R[a_1, \dots, a_s]^{m_d \times m_d}$. 

\begin{example}
\label{exampleXYZ}
(See \cite[Worksheet A]{rationalSOS} for the computations in Maple.)
Let
\[
\begin{aligned}
f(x,y,z) = {} & 10x^4+6x^3y-22x^3z+39x^2y^2-24x^2yz + {}\\
& 33x^2z^2-20xy^2z+8xyz^2-20xz^3+25y^4+10y^3z+y^2z^2+4z^4.
\end{aligned}
\]

Solving the equations for the entries of $Q$ from (\ref{SDP}), indexing the columns by $v = (x^2, xy, xz, y^2, yz, z^2)$, we get the parametrization of $\eLL$:

\begin{equation}
\label{exampleXYZ-parametrization}
W=\left( \begin {array}{cccccc} 10&3&-11&a_{{1,4}}&-12-a_{{2,3}}&a_{{1,6}}\\ \noalign{\medskip}3&39-2\,a_{{1,4}}&a_{{2,3}}&0&-10-a_{{3,4}}&4-a_{{3,5}}\\ \noalign{\medskip}-11&a_{{2,3}}&33-2\,a_{{1,6}}&a_{{3,4}}&a_{{3,5}}&-10\\ \noalign{\medskip}a_{{1,4}}&0&a_{{3,4}}&25&5&a_{{4,6}}\\ \noalign{\medskip}-12-a_{{2,3}}&-10-a_{{3,4}}&a_{{3,5}}&5&1-2\,a_{{4,6}}&0\\ \noalign{\medskip}a_{{1,6}}&4-a_{{3,5}}&-10&a_{{4,6}}&0&4\end {array} \right)
\end{equation}

This matrix has rank 6 (it can be easily computed assigning random values to the unknowns).
To find a SOS decomposition of $f$, we look for a positive semidefinite matrix of this shape. We run the numerical solver SEDUMI to compute an approximate solution that maximizes the minimum eigenvalue. The output matrix has the following eigenvalues:
\[
\{\num{-0.5356e-8}, \num{0.2776e-7}, 4.2732, 16.5132, 28.9722, 46.9092\}.
\]
Intuitively, we interpret these values as two null eigenvalues and four positive eigenvalues, that is, a positive semidefinite solution was found. However, there is no direct way to round the solution matrix so that the approximate zeros become exact zeros. That is, an exact SOS decomposition cannot be directly computed from the approximate SOS decomposition. We will show how to handle this case in the following sections, using as a starting point a suggestion given in \cite{parrilo}.
\end{example}

\section{Facial reduction}
\label{section:facialreduction}
Facial reduction is a general technique to simplify SDPs with no strictly feasible solution. It was first introduced in \cite{wolkowicz} and used successfully in many applications. In our case it can be applied in a very nice and simple way. We recall that the principle of facial reduction is based on the following result (the nice graphical presentation is taken from F. Permenter talk \cite{permenter}).

\begin{lemma}
Fix $U \in \R^{m \times \ell}$. Then
\[
\begin{tikzpicture}[x=3in,y=2in]
    \tikzstyle{ann} = [draw=none,fill=none,right]
    \matrix[nodes={draw, thick, fill=none},
        row sep=0.3cm,column sep=0.3cm]
    {
    \node[draw=none,fill=none] (N1) {$\substack{X \in \Sym^m_0  \\ \range X \subset \range U} \iff$}; &
    \node[rectangle, minimum height=0.75in, minimum width = 0.75in] (N2) {$X$}; &
    \node[draw=none] (N3) {$=$}; &
    \node[rectangle, minimum height=0.75in, minimum width = 0.35in] (N4) {$U$}; &
    \node[rectangle, minimum height=0.35in, minimum width = 0.35in,  anchor=north west] (N5) at ([xshift=1cm]N4.north east) {$\hat X$}; &
    \node[rectangle, minimum height=0.35in, minimum width = 0.75in,  anchor=north west] (N6) at ([xshift=1cm]N5.north east) {$U^t$}; &
    \\
    };
\end{tikzpicture}
\]
for some $\hat X \in \Sym_0^\ell$.
\end{lemma}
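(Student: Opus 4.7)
I would prove the two implications separately; the content is standard linear algebra, with the key input being that every positive semidefinite matrix admits a symmetric square-root factorization.

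For the ``if'' direction, suppose $X = U \hat X U^t$ for some $\hat X \in \Sym^\ell_0$. Then $X$ is symmetric, and for every $v \in \R^m$ we have
\[
v^t X v \;=\; v^t U \hat X U^t v \;=\; (U^t v)^t \hat X (U^t v) \;\ge\; 0,
\]
so $X \in \Sym^m_0$. Moreover $\range X = \range(U \hat X U^t) \subseteq \range U$, since every column of $U \hat X U^t$ is a linear combination of the columns of $U$.

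For the ``only if'' direction, assume $X \in \Sym^m_0$ with $\range X \subseteq \range U$. Writing $r = \rank X$, positive semidefiniteness gives a factorization $X = B B^t$ for some $B \in \R^{m \times r}$ (for instance via the spectral decomposition $X = V \Lambda V^t$, taking $B = V \Lambda^{1/2}$ restricted to the positive eigenvalues). Then $\range B = \range X \subseteq \range U$, so each column of $B$ lies in the column space of $U$. Hence there exists $C \in \R^{\ell \times r}$ with $B = U C$, and setting $\hat X := C C^t \in \Sym^\ell_0$ we obtain
\[
X \;=\; B B^t \;=\; U C C^t U^t \;=\; U \hat X U^t,
\]
as required.

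There is no real obstacle in this proof; the only non-trivial ingredient is the symmetric square-root factorization of PSD matrices, which is standard. The whole content is the observation that the range containment $\range X \subseteq \range U$ lets one ``pull'' a factor of $U$ out of each column of the square root.
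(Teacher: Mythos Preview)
Your proof is correct. The paper does not actually prove this lemma; it is stated there as a recalled fact from the facial reduction literature (with the presentation attributed to a talk of Permenter), so there is no proof in the paper to compare against. Your argument via the factorization $X = BB^t$ and the range inclusion $\range B = \range X \subseteq \range U$ is the standard one and goes through without issue.
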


Assume $X$ has rank $\ell$ and let $\Omega \subset \{1, \dots, m\}$ be a set of indices of $\ell$ linearly independent columns of $X$. There exists $U$ as in the lemma such that it is the identity matrix in the rows indexed by $\Omega$ (apply Gaussian elimination to the columns indexed by $\Omega$). Hence, if $X \in \Sym_0^m$ has rank $\ell \le m$, there exists a positive definite principal submatrix $\hat X \in \Sym_+^\ell$ that satisfies the formula of the lemma.



Our general strategy is then to find linear relations in the space $\eLL \cap \Sym_0^m$ that reduce the rank of the matrices in the search space, so that we can restrict to a principal submatrix of the parametrization where the associated SDP program becomes strictly feasible. We can then compute a positive definite solution using a numerical solver, and plug the values found in the original parametrization matrix to get a positive semidefinite matrix with $\ell$ positive eigenvalues and $m - \ell$ exact null eigenvalues.


\subsection{Real zeros of the polynomial}
The first step for facial reduction is based on the following observation in \cite{parrilo}: let $Q \in \eLL \cap \Sym_0^m$ and $x^\star \in \R^n$ be such that $f(x^\star) = 0$, then $Q v(x^\star) = 0$. That is $v(x^\star)$ is in the kernel of  $\eLL \cap \Sym_0^m$. This follows from the following more general property:

\begin{prop}
\label{prop:nulleq}
If $v \in \R^m$, $Q \in \Sym_0^m$ and $v^t Q v = 0$ then $Q v = 0$.
\end{prop}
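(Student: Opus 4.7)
The plan is to use the standard fact that a positive semidefinite matrix has a positive semidefinite square root, which turns the quadratic form $v^t Q v$ into the squared Euclidean norm of a vector. Concretely, since $Q \in \Sym_0^m$, I can write $Q = R^t R$ for some $R \in \R^{m \times m}$ (for instance, via the symmetric square root, or equivalently a Cholesky-type factorization obtained from the spectral decomposition). Then
\[
0 = v^t Q v = v^t R^t R v = \|R v\|^2,
\]
so $Rv = 0$, and therefore $Qv = R^t(Rv) = 0$. This is the whole argument, and the factorization $Q = R^t R$ is the only non-trivial ingredient.

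As a cross-check and alternative, one can avoid invoking a square root by appealing directly to the Cauchy--Schwarz inequality for the positive semidefinite bilinear form $(u,w) \mapsto u^t Q w$: for any $w \in \R^m$,
\[
(v^t Q w)^2 \le (v^t Q v)(w^t Q w) = 0,
\]
so $v^t Q w = 0$ for all $w$, which forces $Qv = 0$. Yet another route is spectral: diagonalize $Q = \sum_i \lambda_i u_i u_i^t$ with $\lambda_i \ge 0$ and observe that $v^t Q v = \sum_i \lambda_i (u_i^t v)^2 = 0$ forces $\lambda_i (u_i^t v) = 0$ for each $i$, hence $Qv = \sum_i \lambda_i (u_i^t v) u_i = 0$.

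There is no real obstacle here; the statement is a classical consequence of positive semidefiniteness, and the only thing to be careful about is which characterization of PSD matrices to invoke. I would present the square-root proof since it is the shortest and makes the role of semidefiniteness entirely transparent: the hypothesis $v^t Q v = 0$ says that the image $Rv$ has zero norm, and only for PSD $Q$ can this image be meaningfully spoken of.
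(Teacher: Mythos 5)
Your proof is correct. The paper states this proposition without proof (it is treated as a classical property of positive semidefinite matrices, cited implicitly from \cite{parrilo}), so there is no argument to compare against; any of your three routes --- the factorization $Q = R^tR$, Cauchy--Schwarz for the semidefinite bilinear form, or the spectral decomposition --- is a complete and standard justification, and the square-root version you propose to present is the cleanest.
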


Hence, exact real solutions of $f(x) = 0$ provide linear restrictions to the search space that can reduce its dimension.

\begin{remark}
For our facial reduction strategy we need to compute real solutions symbolically.
Since we are assuming the polynomial to be non-negative, the real solutions must be local minima and we can look for them by solving the system
\[
\left\{f(x) = 0, \frac{\partialº f}{\partial x_1}(x) = 0, \dots, \frac{\partial f}{\partial x_n}(x) = 0\right\}.
\]

In our implementation in Maple we use the procedure \texttt{solve} to obtain real solutions, which computes parametrizations in algebraic extensions of $\Q$ of different branches of solutions, and we look for real points in them. Alternatively, one can compute the prime components of the above system using primary decomposition algorithms, and look for real points in them. Computing these exact solutions can be the bottle neck in some high degree examples. However, we will see in Section \ref{section:counterexample} and Appendix \ref{appendix:usage} that using these techniques we can solve many interesting problems which are otherwise difficult to handle.
\end{remark}

\subsection{Algebraic conjugates of the real zeros}
Let $\bar\Q$ be the algebraic closure of $\Q$ and let $x^\star \in (\bar\Q \cap \R)^n$ be an algebraic real zero of $f$. An important observation when we are looking for \emph{rational} SOS decompositions, is that for $L$ a minimal algebraic extension of $\Q$ containing all the entries of $v(x^\star)$ and $G$ a splitting field of $L$, if $Q \in \eLL \cap \Sym_0^m$ has rational entries,
\[
0 = f(x^\star) \Rightarrow 0 = Q v(x^\star) \Rightarrow 0 = \sigma(Q v(x^\star)) = Q  \sigma(v(x^\star)),
\]
where $\sigma \in \Aut(G/\Q)$ and abusing the notation, $\sigma(v) = (\sigma(v_1), \dots, \sigma(v_m))$ for $v \in L^m$. That is, $\sigma(v(x^\star)) \in \ker(Q)$ for any $\sigma \in \Aut(G/\Q)$, and so is any linear combination of these elements.
Of special interest is the following special case.

\begin{prop}
Let $x^\star \in (\bar\Q \cap \R)^n$. Let $L$ be a minimal extension of $\Q$ containing all the entries of $v(x^\star)$.
Assume $Q \in \eLL_\Q \cap \Sym_0^m$. Then $\trace_{L/\Q}(v(x^\star)) \in \Q^m$ is in the kernel of $Q$.
\end{prop}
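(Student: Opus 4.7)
The plan is to leverage the discussion immediately preceding the proposition: we already know that for every $\sigma \in \Aut(G/\Q)$, the vector $\sigma(v(x^\star))$ lies in $\ker(Q)$, and that any $\Q$-linear combination of such vectors remains in $\ker(Q)$. So the whole task reduces to expressing $\trace_{L/\Q}(v(x^\star))$ as such a linear combination and then verifying its rationality.

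First, I would recall the Galois-theoretic description of the trace. For $a \in L$,
\[
\trace_{L/\Q}(a) = \sum_{\tau} \tau(a),
\]
where $\tau$ runs over the $[L:\Q]$ distinct $\Q$-embeddings $L \hookrightarrow G$. Each such embedding admits exactly $[G:L]$ extensions to an element of $\Aut(G/\Q)$, and conversely every $\sigma \in \Aut(G/\Q)$ restricts to one of the $\tau$'s. Hence
\[
\trace_{L/\Q}(a) \;=\; \frac{1}{[G:L]} \sum_{\sigma \in \Aut(G/\Q)} \sigma(a).
\]
Applying this componentwise to the vector $v(x^\star) \in L^m$ (which makes sense because the trace is $\Q$-linear), I obtain
\[
\trace_{L/\Q}(v(x^\star)) \;=\; \frac{1}{[G:L]} \sum_{\sigma \in \Aut(G/\Q)} \sigma(v(x^\star)).
\]

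Second, I would invoke the observation from the preceding paragraph of the paper: since $Q \in \eLL_\Q \cap \Sym_0^m$ and $f(x^\star)=0$, we have $Q v(x^\star) = 0$, so applying any $\sigma \in \Aut(G/\Q)$ to this equality (which fixes the rational entries of $Q$) gives $Q \sigma(v(x^\star)) = 0$. Therefore every summand in the displayed identity lies in $\ker(Q)$, and because $\ker(Q)$ is a $\Q$-linear subspace, the sum---and hence the trace---lies in $\ker(Q)$ as well. Finally, each coordinate of $\trace_{L/\Q}(v(x^\star))$ is by construction fixed by every $\sigma \in \Aut(G/\Q)$, hence belongs to the fixed field $\Q$, so the trace vector lies in $\Q^m$.

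I do not expect any real obstacle: the only subtle point is making sure one passes correctly between the trace over $L/\Q$ and the full sum over $\Aut(G/\Q)$ (the factor $[G:L]$ is a nonzero rational, so it is harmless for the kernel argument). Everything else is the linearity of $\ker(Q)$ combined with the $\sigma$-invariance argument already used in the text.
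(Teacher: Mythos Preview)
Your proof is correct and follows exactly the approach the paper has in mind: the proposition is stated as a ``special case'' of the immediately preceding observation that every $\sigma(v(x^\star))$, and hence any $\Q$-linear combination of these vectors, lies in $\ker(Q)$, and you simply make explicit that $\trace_{L/\Q}(v(x^\star))$ is such a combination. The only detail you add beyond the paper is the bookkeeping factor $1/[G:L]$ relating the trace to the full sum over $\Aut(G/\Q)$, which is harmless.
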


The importance of this proposition is that it provides a vector with rational coordinates in the kernel of $\eLL_\Q \cap \Sym_0^m$, and hence the linear relations derived are simple and easy to handle. One could alternatively work in the splitting field $G$ of $L$ and add all the equations for the conjugates, but computing a splitting field of $L$ can be very expensive even for small degree extensions. In contrast, the trace of elements in $L$ can be easily obtained from the coefficients of the minimal polynomial of a primitive element for the extension $L / \Q$.

\begin{example}
Carrying on Example \ref{exampleXYZ}, we look for real solutions of $f(x,y,z) = 0$.
Using the command \texttt{solve} from Maple to solve the system
\[
\left\{f(x,y,z) = 0, \frac{\partialº f}{\partial x}(x,y,z) = 0, \frac{\partial f}{\partial y}(x,y,z) = 0, \frac{\partial f}{\partial z}(x,y,z) = 0\right\}.
\]
we get the solution:
\[
\gamma(t) = \left(t, \alpha t,  \frac{1}{46} (\alpha^3t+128\alpha^2+25\alpha+73) \right)
\]
where $\alpha$ is a root of $m(Z) = 50Z^4+28Z^3-Z^2+23Z-8$. This polynomial has two real roots.

We fix $t=1$ and use this solution to reduce the dimension of the search space as explained above.  That is, we define $\eLL_1 = \eLL \cap \{Q\in \R^{6\times 6} :Q . v(s(1)) = 0\}$. This is equivalent to solving the system of linear equations given by $W.v(s(1)) = 0$, where $W$ is the parametrization of $\eLL$ given in Example \ref{exampleXYZ}. Solving these equations we get that the dimension of $\eLL_1$ is 3 and the rank of the parametrization matrix is 5.

 We could now take a principal submatrix of the parametrization of $\eLL_1$ of size and rank 5 and use SEDUMI to compute numerically a positive semidefinite matrix in the shape of that principal submatrix. The problem is not strictly feasible, the solution we get has still an approximate null eigenvalue and we cannot use it to compute an exact solution to the problem.

If instead of using the equation $W . v(s(1))=0$ we compute the trace of $v(s(1))$ for the extension $\Q(\alpha) \hookrightarrow \Q$, $\tilde v = (4, -14/25, 53/10, 221/625, -396/125, 1209/100)$, and we solve $W .\tilde v = 0$ for $W$ the parametrization of $\eLL$, we find that there is only one solution:
\[
  \left( \begin {array}{cccccc} 10&3&-11&15&3&2\\ \noalign{\medskip}3&9&-15&0&0&6\\ \noalign{\medskip}-11&-15&29&-10&-2&-10\\ \noalign{\medskip}15&0&-10&25&5&0\\ \noalign{\medskip}3&0&-2&5&1&0\\ \noalign{\medskip}2&6&-10&0&0&4\end {array} \right)
\]

Diagonalizing this matrix, we obtain a rational solution to our original problem:
\[
f = (x^2 + 3xy - 5xz+ 2z^2)^2 + (3x^2 - 2xz +yz+5y^2)^2
\]
(the decomposition obtained from the matrix is not unique, different diagonalizations can give different outputs).

\end{example}


\subsection{Ghost solutions}
In the last subsection we used real zeros $x^\star$ of $f$ to obtain elements $v(x^\star)$ in the kernel of $\eLL \cap \Sym_0^m$. Based on Remark \ref{prop:nulleq}, we can in many cases obtain other elements that must be in that kernel.

{\bf Null entries in the diagonal.} If a parametrization matrix $W$ of $\eLL$ (or a subspace of $\eLL$ obtained after intersecting with some linear restrictions) has a null value in the diagonal, this is an obstruction for the matrices in $\eLL$ being positive definite, all the entries in the diagonal of a positive definite matrix are positive. This is an easy to remove obstruction by the following simple lemma.

\begin{lemma}
If $Q \in \Sym_0^{m \times m}$ and $Q_{kk} \equiv 0$ then $e_k^t . Q . e_k$ = 0 and therefore $Q . e_k = 0$, where $e_k$ is the $k$-th canonical vector.
\end{lemma}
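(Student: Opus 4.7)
The plan is to recognize that this lemma is an immediate consequence of Proposition \ref{prop:nulleq}, applied to the particular vector $v = e_k$. The statement has two assertions, and I will handle them in sequence.

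First, I would observe that for any symmetric matrix $Q \in \Sym^m$, the scalar $e_k^t Q e_k$ is by definition equal to the $(k,k)$-entry $Q_{kk}$. Since the hypothesis is that $Q_{kk} \equiv 0$, this immediately gives $e_k^t Q e_k = 0$, establishing the first assertion with no further work.

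Second, I would invoke Proposition \ref{prop:nulleq}, which states that whenever $Q \in \Sym_0^m$ and a vector $v \in \R^m$ satisfies $v^t Q v = 0$, one has $Q v = 0$. Taking $v = e_k$ and using the first step, the hypotheses of that proposition are satisfied, so we conclude $Q e_k = 0$, as required.

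There is essentially no obstacle here: once one reads the diagonal entry as a quadratic form evaluated at a canonical basis vector, the result is a one-line application of the previously stated proposition. The only subtlety worth spelling out is the notational point that $Q_{kk} \equiv 0$ (as a polynomial expression in the free parameters of a parametrization of $\eLL$) implies the numerical identity $Q_{kk} = 0$ for every specialization, so the argument applies uniformly to every matrix in the parametrized family.
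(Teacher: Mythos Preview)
Your proposal is correct and matches the paper's approach: the paper does not give an explicit proof of this lemma, but presents it immediately after Proposition~\ref{prop:nulleq} as a direct application of that result, which is exactly what you do. Your added remark about the parametrized setting is a helpful clarification but not essential to the argument.
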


Hence, $W_{kk} = 0$ gives $m-1$ new equations to reduce the dimension of the search space. Note that all the entries in the $k$-th row and column of the parametrization of $\eLL \cap \{Q \in \R^{m\times m}: Q e_k = 0\}$ become zero and we can ignore that row and column.

In practice, it is a common situation that a parametrization matrix has null values in the diagonal and so this is a very simple and useful technique in the facial reduction strategy.

Note that there could be no real point $x = (x_1, \dots, x_n)$ such that $v(x) = e_{k}$. That is, we may be using solutions of $v^t Q v = 0$ that do not come from solutions of $f = 0$.

{\bf Principal submatrices with zero determinant.} The case of the last paragraph can be extended to any principal submatrix of size $\ell < m$ with zero determinant.
\begin{lemma}
Let $Q \in \Sym_0^{m \times m}$ and $\Omega \subset \{1, \dots, m\}$ be such that $\det(Q_\Omega) = 0$, where $Q_\Omega$ is the principal submatrix of $Q$ with rows and columns indexed by $\Omega$. Let $\tilde v \in \R^\ell$, $\ell = |\Omega|$, be a vector in $\ker(Q_\Omega)$. Let $v \in \R^m$ be the extension of $\tilde v$ to $\R^m$ with zeros in the other entries (that is, $v_{i_j} = \tilde v_j$ if $i_j \in \Omega$ and $v_i = 0$ if $i \not\in \Omega$). Then $Q . v = 0$.
\end{lemma}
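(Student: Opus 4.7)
The plan is to reduce the statement to a direct application of Proposition \ref{prop:nulleq}, by showing that $v^t Q v = 0$ and then invoking the fact that $Q$ is positive semidefinite.

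First I would observe that because the entries of $v$ outside $\Omega$ are zero, the quadratic form $v^t Q v$ depends only on the entries of $Q$ indexed by pairs in $\Omega \times \Omega$. More precisely, expanding the quadratic form gives
\[
v^t Q v = \sum_{i,j=1}^m Q_{ij} v_i v_j = \sum_{i,j \in \Omega} Q_{ij} v_i v_j = \tilde v^t Q_\Omega \tilde v.
\]
This is the key identity that links the full matrix $Q$ with its principal submatrix $Q_\Omega$.

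Next, since $\tilde v \in \ker(Q_\Omega)$ by hypothesis, we have $Q_\Omega \tilde v = 0$ and therefore $\tilde v^t Q_\Omega \tilde v = 0$. Combining this with the identity above yields $v^t Q v = 0$. Since $Q \in \Sym_0^m$ is positive semidefinite and $v \in \R^m$, Proposition \ref{prop:nulleq} applies and gives $Q v = 0$, which is the desired conclusion.

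There is no real obstacle here: the argument is a two-line reduction, and the only thing to check is that padding $\tilde v$ with zeros outside $\Omega$ exactly picks out the principal submatrix $Q_\Omega$ in the quadratic form. Note that the hypothesis $\det(Q_\Omega) = 0$ is only used to guarantee the existence of a nonzero $\tilde v$ in $\ker(Q_\Omega)$; the proof itself works for any $\tilde v$ in that kernel.
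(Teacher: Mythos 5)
Your proof is correct and is exactly the argument the paper intends: the surrounding text introduces these ``ghost solutions'' as consequences of Proposition \ref{prop:nulleq}, and the padding identity $v^t Q v = \tilde v^t Q_\Omega \tilde v = 0$ is the right reduction. The paper itself omits the proof, so nothing further is needed.
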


 To apply this lemma, we look for principal submatrices of a parametrization $W$ of the search space with zero determinant and derive the corresponding linear restrictions. In practice, searching among all possible principal submatrices for those with zero determinant can be time consuming, so in our implementation we consider only $2 \times 2$ principal submatrices. This usually provides new conditions and decrease the rank of $\eLL$.

Note again that since a parametrization $W$ of $\eLL$ is a matrix with parameters, if we take large submatrices it can be more efficient to replace the parameters by random values to determine whether the determinant is zero or not. Also, in theory it could happen that the vector $\tilde v$ in the kernel contains parameters of $W$, and hence it cannot be used to get linear relations. However this theoretical situation never occurred in our examples.

\subsection{Rational entries.}
\label{subsection:rational}
The last tool we use to reduce the dimension of the space when we are looking for rational solutions is to force all coefficients of non-rational numbers in the entries of a parametrization matrix to be $0$. For example, if an entry of the matrix is $\sqrt{2} t$ and another one is $\sqrt{3} t$, these two entries cannot be rational at the same time, unless $t = 0$, hence in this case we can impose $t = 0$.  Since the entries of the matrix can be more complicated with many unknowns and different coefficients, it can be difficult to derive these type of restrictions. We simplify this restriction as follows. Let $a = a_0 + a_1 t_1 + \dots + a_s t_s$, $a_i \in \bar \Q$ for $0 \le i \le s$, be an entry of the matrix and $\alpha$ a primitive element for the extension $\Q(a_0, a_1, \dots, a_s) \hookrightarrow \Q$. That is, we can regard the elements $a_i$ as polynomials $a_i(\alpha)$ with rational coefficients. Now we force all the coefficients of the positive powers of $\alpha$ in $a$ to be zero, and we use the resulting equations to reduce the dimension of the problem. For example if $a = 1 + (\sqrt[3]{2} + 2 \sqrt[3]{2}^2) t_1 + (\sqrt[3]{2} + \sqrt[3]{2}^2) t_2$, we add the restrictions $t_1 + t_2 = 0$ and $2 t_1 + t_2 = 0$, from which we derive $t_1 = t_2 = 0$. Note that the restrictions we add are not in general necessary conditions, even in the case of rational solutions, and hence we can be losing solutions. This strategy turns out to be very effective in practice to find rational solutions, but it cannot be used if we are trying to find all solutions or prove that there are no rational solutions. Looking more carefully at the coefficients and the relations among them we can use this strategy to derive necessary conditions, but we do not carry further this analysis in this paper.

\begin{example}
\label{exampleB}
(See \cite[Worksheet B]{rationalSOS} for the computations in Maple.)
We apply the above techniques in the following example:
\[
\begin{aligned}
f(x,y,z) = {} & 3618x^8+468x^7y+6504x^7z-1104x^6y^2+2616x^6yz+57481x^6z^2- \\
& 144x^5y^3-1652x^5y^2z-16440x^5yz^2+23420x^5z^3+160x^4y^4+ \\
& 1392x^4y^3z-2520x^4y^2z^2- 28448x^4yz^3+91320x^4z^4-240x^3y^4z+ \\
& 1728x^3y^3z^2+10524x^3y^2z^3-85500x^3yz^4+34740x^3z^5-3696x^2y^3z^3+ \\
& 28920x^2y^2z^4- 15192x^2yz^5-57267x^2z^6+720xy^4z^3-3312xy^3z^4- \\
& 3168xy^2z^5+ 26352xyz^6-40176xz^7+720y^4z^4+864y^3z^5-9072y^2z^6+ \\
& 46656z^8.
\end{aligned}
\]
This polynomial is a sum of three squares of polynomials with coefficients in $\Q(\alpha)$ with $\alpha$ a root of $Z^3-2$ and we want to see if we can write it as a sum of squares of polynomials with rational coefficients (we will show in Section \ref{section:counterexample} how this polynomial was constructed).

Since $f$ is homogenous of degree $8$ in $x, y, z$, the parametrization matrix $W$ of the space $\eLL$ of solutions to (\ref{SDP}) is a $15 \times 15$ matrix with rows and columns indexed by $v(x,y,z) = (x^4, x^3y, x^3z, x^2y^2, x^2yz,  x^2z^2,  xy^3, xy^2z, \allowbreak xyz^2, xz^3, \allowbreak y^4, y^3z, \allowbreak y^2z^2, yz^3, z^4)$. The space $\eLL$ has dimension 75 and rank 15.

If we use the numeric solver SEDUMI to compute a positive semidefinite matrix in $\eLL$ we get a solution with 10 positive eigenvalues and 5 (approximate) null eigenvalues. We need to reduce the generic rank of the parametrization matrix to 10 so that we can find a $10 \times 10$ principal submatrix of full rank, use the numerical solver to compute a positive definite solution in that space, and extend the solution to a $15 \times 15$ matrix with $10$ positive eigenvalues and $5$ exact zero eigenvalues.

We solve the system of equations
\[
\left\{f(x,y,z) = 0, \frac{\partial f}{\partial x}(x,y,z) = 0, \frac{\partial f}{\partial x}(x,y,z) = 0, \frac{\partial f}{\partial x}(x,y,z) = 0\right\}
\]
in Maple and obtain the following three branches of solutions:
\begin{align*}
s_1(t) &= (0, t, 0), \\
s_2(t) &= (0, -3t, t), \\
s_3(t) &= (t, \frac{1}{60}(648\alpha^4-327\alpha^3+152\alpha^2-777\alpha-36)t, \alpha t),
\end{align*}
where $\alpha$ is a root of $648Z^5-327Z^4+152Z^3-921Z^2-36Z+36$. This last polynomial has odd degree so it contains at least one real root.

From the first solution we derive the condition $W . v(0,1,0) = 0$ and from the second solution we derive the condition $W . v(0,-3,1) = 0$.
For the last solution we derive first the trace condition for $t = 1$, $W . \Tr(v(s_3(1))) = 0$ These restrictions reduce the problem to $\eLL_1 \subset \eLL$ of dimension 39 and rank 12.

Now we use the ghost solutions coming from null values in the diagonal of the parametrization of $\eLL_1$. There are three null values, and adding the corresponding restrictions we reduce to $\eLL_2 \subset \eLL_1$ of dimension 23 and rank 10.

If we solve numerically this problem, the solution we get has 5 exact null eigenvalues as expected, but also 4 approximate null eigenvalues.
We need to further reduce the dimension of the problem to get an exact solution.

We use the third solution with $t = 1$ and reduce the problem to $\eLL_3$ with dimension 16 and rank 9. Now the parametrization matrix contains non-rational coefficients. To look for rational solutions, we can apply the strategy in Section \ref{subsection:rational}. This reduces the search space to $\eLL_4$ of dimension 4 and rank 6.
We choose a $6 \times 6$ principal submatrix of full rank of the parametrization of $\eLL_4$ and use the numeric solver SEDUMI to compute a positive semidefinite matrix in that space. The problem is now strictly feasible, the solver finds a positive definite solution. From this solution, we obtain a solution to the original problem that has 9 exact null eigenvalues and 6 positive eigenvalues (the rank was 6), hence we have solved the problem.

\begin{remark}
If we compute the characteristic polynomial of the resulting matrix, we obtain a polynomial of the form $\chi(x) = x^9 (x^6-a_5 x^5 + a_4 x^4 - a_3 x^3 + a_2 x^2 - a_1 x + a_0)$, with $a_i \in \Q_{>0}$. Since the second factor has alternating signs by Descarte's rule we know that it has exactly 6 positive roots.
\end{remark}

\end{example}

\subsection{A step by step algorithm}

The ideas developed above are to be considered more as a set of tools to handle the problem than as a fixed algorithm. Depending on the problem, some of the tools will provide results where others may not, and in some cases applying repeatedly some of the tools can speed up the computations. In particular, it seems to be a good strategy to add the equations corresponding to null values in the diagonal as soon as such a value appears.

Nevertheless, for clarity and definiteness we present in this section a step by step algorithm. This is essentially the algorithm that is implemented in procedure \texttt{exactSOS} of the Maple package \texttt{rationalSOS}. For simplicity, we assume that we are looking for exact \emph{rational} solutions and we use only the equations coming from the trace of real solutions, as the plain equations (without taking trace) may introduce non-rational coefficients.

\begin{algorithm}
\label{mainAlgorithm}
\caption{Exact sum of squares decomposition}
\begin{algorithmic}
\REQUIRE $f \in \Q[x_1, \dots, x_n]$ homogenous of degree $2d \in \N$.
\ENSURE $\{p_1, \dots, p_s\} \subset \Q[x_1, \dots, x_n]$ homogenous of degree $d$ and $\{c_1, \dots, c_s\} \subset \Q$ such that $f = c_1 p_1^2 + \dots + c_s p_s^2$, if such decomposition is found. $0$ if no decomposition is found.
\STATE $S := \{\omega_i = (\omega_{i,1}, \dots, \omega_{i,n})\}_{\{1 \le i \le t\}} \subset \bar\Q^n$, a set of solutions of $f = 0$ (which can be computed by solving the system $\{f= 0\} \cap \{\frac{\partial f}{\partial x_i} = 0\}_{1 \le i \le n}$)
\STATE $v := $ vector of all the monomials of degree $d$ in $n$ variables, $v_\alpha = x^\alpha$ for $\alpha \in M_d$
\STATE $W := \text{ matrix parametrization of  } \eLL = \{Q \in \Sym^{S_d} | f(x) = v(x)^t Q v(x)\}$
\FOR{i:=1  \TO t}
\STATE $u := \Tr_{\Q(\omega_{i,1}, \dots, \omega_{i,n})/\Q}(v(\omega_i))$
\STATE Set $\eLL = \eLL \cap \{Q \in \Sym^{S_d} | Q . u = 0 \}$
\STATE $W := \text{ parametrization of } \eLL$, computed by solving the linear equations
\ENDFOR
\STATE Set $\eLL = \eLL \cap \{Q \in \Sym^{S_d} | Q . e_i = 0 \text{ for all } i \text{ such that } W(i,i) = 0\}$
\STATE $W(T_1, \dots, T_k) := \text{ parametrization of } \eLL$
\STATE $s := \rank(W)$
\STATE $W_s := $ principal submatrix of $W$ of rank $s$
\STATE $(t_1, \dots, t_k) := $ numerical solution  to the problem of maximizing the minimal eigenvalue of $W_s(T_1, \dots, T_k)$
\IF{$W_s(t_1, \dots, t_k)$ is positive definite}
\STATE Compute an \emph{LU decomposition} of $W(t_1, \dots, t_k)$: $P, D, U \in \Q^{m_d \times m_d}$ with $P$ a permutation matrix, $U$ upper triangular with all entries in the diagonal equal to $1$, and $D \in \Q_{\ge 0}^{m_d \times m_d}$ diagonal matrix such that $W(t_1, \dots, t_k) = P U^t D U$ (by Gaussian elimination)
\FOR{i:=1  \TO s}
\STATE $p_i = \sum_{j=1}^r U_{i, j} v_j(x_1, \dots, x_n)$
\ENDFOR
\RETURN $\{p_1, \dots, p_s\}$, $\{D_{1,1}, \dots, D_{s,s}\}$
\ELSE
\RETURN 0
\ENDIF
\end{algorithmic}
\end{algorithm}

\section{Sum of squares of two polynomials over a field of odd degree}
\label{section:sumoftwo}

We now turn to the problem of determining when does a rational polynomial that allows a $\R$-SOS decomposition allow also a $\Q$-SOS decomposition.

In \cite[Section 5]{scheiderer} the author poses some open questions. Question 5.3 is: ``Let $K$ be a number field of odd degree, and assume that a form $f$ over $\Q$ is a sum of squares over $K$. Then, is $f$ a sum of squares over $\Q$?'' We provide in this section an affirmative answer for the case of a sum of two squares over $K$.

\begin{theorem}
\label{theorem:twopolys}
Let $f \in \Q[x_1, \dots, x_n]$ be such that
\[
f=p_1^2 + p_2^2,
\]
with $p_1, p_2 \in \Q(\alpha)[x_1, \dots, x_n]$ and $\Q(\alpha) \hookrightarrow \Q$ an algebraic extension of odd degree.

There exist $q_1, q_2 \in \Q[x_1, \dots, x_n]$ such that
\[
f= q_1^2 + q_2^2.
\]
\end{theorem}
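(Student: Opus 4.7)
The plan is to reformulate the hypothesis as a scalar-representation question for the quadratic form $q = X^2 + Y^2$, and then apply two classical descent results: Springer's theorem on odd-degree field extensions, followed by a Cassels-style descent from rational functions to polynomials. For the first step, pass to function fields. Let $F = \Q(x_1, \dots, x_n)$ and $E = L(x_1, \dots, x_n)$, where $L = \Q(\alpha)$. By Gauss's lemma the minimal polynomial of $\alpha$ over $\Q$ remains irreducible over $F$, so $[E:F] = [L:\Q]$ is odd. The hypothesis $f = p_1^2 + p_2^2$ with $p_i \in L[x_1, \dots, x_n] \subset E$ says exactly that $q$ represents $f \in F$ over $E$, so Springer's theorem yields $r_1, r_2 \in F$ with $f = r_1^2 + r_2^2$.

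Next, descend from rational functions to polynomials inside the UFD $R = \Q(i)[x_1, \dots, x_n]$, equipped with the $\Q$-conjugation $a + ib \mapsto a - ib$ fixing the variables. Let $g = r_1 + i r_2 \in \Q(i)(x_1, \dots, x_n)$, so that $g\bar g = f$. Write $g = p/q$ with coprime $p, q \in R$; then $p \bar p = f \cdot q\bar q$. Since $\gcd(p, q) = 1$ and $q \mid p\bar p$, unique factorization forces $q \mid \bar p$. Write $\bar p = q t$; conjugating gives $p = \bar q \bar t$, and substituting into $p\bar p = f q \bar q$ yields $t\bar t = f$. Writing $t = u + iv$ with $u, v \in \Q[x_1, \dots, x_n]$, we obtain $f = u^2 + v^2$ in $\Q[x_1, \dots, x_n]$.

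The resulting $u, v$ can be taken homogeneous of degree $d$: since $u, v$ have real coefficients, the top homogeneous part of $u^2 + v^2$, namely $u^{(d^*)2} + v^{(d^*)2}$ where $d^* = \max(\deg u, \deg v)$, cannot vanish identically unless both top parts vanish; so matching with the homogeneous $f$ of degree $2d$ forces $d^* = d$ and $f = u^{(d)2} + v^{(d)2}$. The main conceptual hurdle is to identify the right framework: once one recognizes the hypothesis as saying that $X^2 + Y^2$ represents $f$ over an odd-degree scalar extension, Springer's theorem applies immediately, and the passage from $\Q(x)$ back to $\Q[x]$ then becomes a short coprimality argument in $\Q(i)[x]$, reflecting the fact that sums of two squares in $\Q[x]$ are precisely the norms from $\Q(i)[x]$.
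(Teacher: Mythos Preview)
Your proof is correct and takes a genuinely different route from the paper's. Both arguments reach the same intermediate point---that $f$ is a sum of two squares in the rational function field $\Q(x_1,\dots,x_n)$---and then descend to $\Q[x_1,\dots,x_n]$; the difference lies in how that intermediate point is reached and in how the descent is justified. The paper argues explicitly: it factors $f=(p_1+Ip_2)(p_1-Ip_2)$, multiplies over all $d$ conjugates of $\alpha$ to obtain $f^d=P_1^2+P_2^2$ with $P_1,P_2\in\Q[x]$, and then uses that $d-1$ is even to write $f=(P_1/f^{(d-1)/2})^2+(P_2/f^{(d-1)/2})^2$. You instead pass to function fields and invoke Springer's theorem for the form $X^2+Y^2$ over the odd-degree extension $\Q(\alpha)(x)/\Q(x)$. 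For the descent step, the paper quotes a result of Choi et al.\ as a black box (its Corollary based on Theorem~A of \cite{choi}), whereas you supply a short self-contained argument via the norm from $\Q(i)[x]$. Your approach is cleaner and more conceptual; the paper's approach has the advantage of being fully constructive---the polynomials $P_1,P_2$ are given by explicit symmetric expressions in the conjugates, which matches the algorithmic spirit of the surrounding sections and is exploited in the worked example that follows the proof.
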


For the proof we need the following result \cite[Theorem A]{choi}:
\begin{theorem}
\label{theoremA}
Let $A$ be a unique factorization domain (UFD) and $F$ the quotient field of $A$. Assume that $I = \sqrt{-1} \not\in F$, and that $A[I] = \{a_1 + a_2 I : a_1, a_2 \in A\}$ is also a UFD. If $a \in A$ is a sum of two squares in $F$ then $a$ is a sum of two squares in $A$.
\end{theorem}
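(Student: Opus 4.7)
The plan is to combine two ingredients: the cited Theorem \ref{theoremA} of Choi--Lam--Reznick, which lets one pull back a sum-of-two-squares decomposition from a field of fractions to a UFD, and a norm-descent argument that exploits the oddness of $[\Q(\alpha):\Q]$. Concretely, I first apply Theorem \ref{theoremA} with $A = \Q[x_1,\dots,x_n]$, whose quotient field is $F = \Q(x_1,\dots,x_n)$. We have $i \notin F$ (since $F$ is formally real) and $A[i] = \Q(i)[x_1,\dots,x_n]$ is a UFD (polynomial ring over a field). So it suffices to show that $f$ is a sum of two squares in $F$.

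For that rational descent, set $d = [\Q(\alpha):\Q]$ (odd), $K = \Q(\alpha)$, $E = K(i)$, $L = \Q(i)$; since $d$ is odd and $[\Q(i):\Q]=2$, $[E:L] = d$. Write the hypothesis as a factorization over $E[x_1,\dots,x_n]$:
\[
f \;=\; (p_1 + i p_2)(p_1 - i p_2) \;=\; \pi \, c(\pi),
\]
where $c$ denotes the nontrivial element of $\Gal(L/\Q)$ extended to $E$ by fixing $\alpha$. The key trick is to take the norm $N := N_{E/L}$, extended to polynomials (equivalently, to $N_{E(x)/L(x)}$, which still has degree $d$). Because every $L$-embedding of $E$ into a Galois closure fixes $i$, the operations $N$ and $c$ commute, so
\[
N(\pi)\,\cdot\,c\!\left(N(\pi)\right) \;=\; N(\pi)\,N(c(\pi)) \;=\; N(\pi\,c(\pi)) \;=\; N(f) \;=\; f^{\,d},
\]
using $f \in L[x_1,\dots,x_n]$ and $[E:L]=d$ in the last equality. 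Note $N(\pi) \in L[x_1,\dots,x_n] = \Q(i)[x_1,\dots,x_n]$.

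Now I exploit that $d$ is odd. Set $\tau = N(\pi)/f^{(d-1)/2} \in \Q(i)(x_1,\dots,x_n)$; the identity above becomes $\tau\, c(\tau) = f$. Writing $\tau = q_1 + i q_2$ with $q_1, q_2 \in F = \Q(x_1,\dots,x_n)$ gives $f = q_1^{\,2} + q_2^{\,2}$ in $F$. Applying Theorem \ref{theoremA} then produces the desired $q_1', q_2' \in \Q[x_1,\dots,x_n]$ with $f = q_1'^{\,2} + q_2'^{\,2}$.

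The parts that need the most care are the two ``compatibility'' checks I have glossed over: that the Galois-theoretic norm of a polynomial in $E[x_1,\dots,x_n]$ really lies in $L[x_1,\dots,x_n]$ (so that writing $\tau = q_1 + iq_2$ over $F$ makes sense), and that $c$ commutes with the $L$-embeddings of $E$ into any Galois closure, which is the point where the splitting of $[\Q(\alpha,i):\Q]$ into a degree-$2$ part ($L/\Q$) and a coprime degree-$d$ part is used. Everything else is bookkeeping. The reduction via Theorem \ref{theoremA} is essentially a free move once $F$ is recognized as the quotient field of a polynomial UFD whose Gaussian extension is again a UFD, so the real content of the proof is the odd-degree norm descent above; the fact that $(d-1)/2$ is an integer is exactly where the hypothesis ``odd degree'' is consumed.
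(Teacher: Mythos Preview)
You have not proved the statement you were given. The displayed statement is Theorem~\ref{theoremA}, the Choi--Lam--Reznick result: if $a \in A$ is a sum of two squares in the fraction field $F$ of a UFD $A$ (with $A[I]$ also a UFD), then $a$ is already a sum of two squares in $A$. Your proposal does not address this claim at all; instead it \emph{invokes} Theorem~\ref{theoremA} as a black box and uses it to prove a different result, namely Theorem~\ref{theorem:twopolys} (that a rational form which is a sum of two squares over an odd-degree extension $\Q(\alpha)$ is a sum of two squares over $\Q$). In the paper Theorem~\ref{theoremA} is merely cited from \cite{choi} and not reproved, so there is no argument in the paper to compare against --- and your write-up contains no argument for it either.

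If, on the other hand, your intended target was Theorem~\ref{theorem:twopolys}, then your argument is correct and coincides with the paper's proof in all essentials. Both factor $f=(p_1+Ip_2)(p_1-Ip_2)$, pass to the product over the conjugates of $\alpha$ (you package this as the norm $N_{E/L}$ with $L=\Q(i)$, the paper writes the product out and handles the sign $\sigma_i(I)=\pm I$ by hand), obtain $f^{d}=P_1^{2}+P_2^{2}$ with $P_1,P_2\in\Q[x_1,\dots,x_n]$, divide by $f^{\,d-1}$ using that $d$ is odd, and then apply Theorem~\ref{theoremA} (via Corollary~\ref{theoremC}) to pass from $\Q(x_1,\dots,x_n)$ to $\Q[x_1,\dots,x_n]$. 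Your norm formulation is marginally tidier --- working over $L=\Q(i)$ makes the commutation of $c$ with the $L$-embeddings automatic and avoids the $\sigma_i(I)=\pm I$ case split --- but the mathematical content is the same.
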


Taking $A = \Q[x_1, \dots, x_n]$ we get:

\begin{coro}
\label{theoremC}
Let $f \in \Q[x_1, \dots, x_n]$ and assume that $f$ is a sum of two squares in $\Q(x_1, \dots, x_n)$. Then it is already a sum of two squares in $\Q[x_1, \dots, x_n]$.
\end{coro}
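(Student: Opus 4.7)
The plan is to apply Theorem \ref{theoremA} directly with $A := \Q[x_1,\dots,x_n]$ and $F := \Q(x_1,\dots,x_n)$. The whole proof reduces to verifying the four hypotheses of that theorem for this choice, after which the conclusion is immediate: if $f \in A$ is a sum of two squares in $F$, then it is a sum of two squares in $A$, which is exactly the statement of the corollary.

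First I would verify that $A$ is a UFD. Since $\Q$ is a field, and polynomial rings over a UFD are again UFDs (iterating the classical Gauss lemma in $n$ steps), $A = \Q[x_1,\dots,x_n]$ is a UFD. Next, $F = \Q(x_1,\dots,x_n)$ is by definition the field of fractions of $A$, so the first part of the hypothesis is automatic.

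The one point that deserves a brief justification is that $I = \sqrt{-1} \notin F$. The variables $x_1,\dots,x_n$ are algebraically independent over $\Q$, so the algebraic closure of $\Q$ inside $F$ is $\Q$ itself; since $I \notin \Q$, we conclude $I \notin F$.

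Finally I would check that $A[I]$ is a UFD. By construction,
\[
A[I] = \Q[x_1,\dots,x_n][I] = \Q(i)[x_1,\dots,x_n],
\]
where the last equality uses that $\Q[i] = \Q(i)$ is already a field (as $I$ is algebraic of degree $2$ over $\Q$). So $A[I]$ is a polynomial ring in $n$ variables over the field $\Q(i)$, and in particular a UFD by the same iterated Gauss-lemma argument. All hypotheses of Theorem \ref{theoremA} are met, so applying it with $a = f$ finishes the proof. There is no real obstacle here; the only place where one might pause is the verification $I \notin F$, which is the small piece distinguishing the hypothesis from a tautology.
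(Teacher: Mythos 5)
Your proposal is correct and follows exactly the paper's route: the corollary is obtained by specializing Theorem \ref{theoremA} to $A=\Q[x_1,\dots,x_n]$, and your verifications of the hypotheses (that $A$ and $A[I]=\Q(i)[x_1,\dots,x_n]$ are UFDs and that $\sqrt{-1}\notin\Q(x_1,\dots,x_n)$) are the routine checks the paper leaves implicit.
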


\begin{proof}[Proof of Theorem \ref{theorem:twopolys}]
Let $m(Z) \in \Q[Z]$ be the minimal polynomial of $\alpha$ over $\Q$.
Let $L$ be a splitting field of $m(Z)$ over $\Q$ and let $\alpha = \alpha_0, \alpha_1, \dots, \alpha_{d-1}$ be the roots of $m(Z)$ in $L$. Let $\sigma_i$, $0 \le i \le {d-1}$, be elements of $\Gal(L / \Q)$ such that $\sigma_i(\alpha) = \alpha_i$.

We can factorize $f = p_1^2 + p_2^2 = (p_1 + I p_2)(p_1 - I p_2)$, where $I^2 = -1$. For each $0 \le i \le d-1$, we extend $\sigma_i$ to an automorphisms of $L(I)$, and we call the extension also $\sigma_i$. Taking the product of the conjugates:

\[
f^d = \prod_{i=0}^{d-1}  \sigma_i\left((p_1+Ip_2)(p_1-Ip_2)\right) = \prod_{i=0}^{d-1}  \sigma_i(p_1+Ip_2) \sigma_i(p_1-Ip_2).
\]

Since $\sigma_i(I)$ is either $I$ or $-I$,  $\sigma_i(p_1+Ip_2) \sigma_i(p_1-Ip_2) =  (\sigma_i(p_1)+I\sigma_i(p_2)) (\sigma_i(p_1)-I\sigma_i(p_2))$ and we can regroup the factors:
\[
f^d = \left(\prod_{i=0}^{d-1}  \sigma_i(p_1)+I\sigma_i(p_2)\right) \left(\prod_{i=0}^{d-1}  \sigma_i(p_1)-I\sigma_i(p_2)\right)
\]
which we can rewrite as
\[
f^d = \left(  P_1+I P_2\right) \left( P_1-I P_2 \right),
\]
with $P_1, P_2$ symmetric polynomials on the $d$ conjugates of $\alpha$.
Hence $P_1$ and $P_2$ are fixed by $\Gal(L  / \Q)$ and they are therefore polynomials in $\Q[x_1, \dots, x_n]$.

We obtain
\[
f = \frac{P_1^2}{f^{d-1}} + \frac{P_2^2}{f^{d-1}}
\]
with $d-1$ even. Hence $f$ if the sum of the squares of two rational functions.
By Corollary \ref{theoremC}, this implies that $f$ can be written as the sum of two polynomials with rational coefficients.

\end{proof}

\begin{remark} The polynomials $P_1$ and $P_2$ can be constructed following the above steps (assuming we are able to compute in the splitting field of $\Q(\alpha)$). The proof of Theorem A in \cite{choi} is also constructive. The above proof is then an algorithm for computing the $\Q$-SOS decomposition.
\end{remark}

\begin{example}
(See \cite[Worksheet C]{rationalSOS} for the computations in Maple.)
Let
\begin{align*}
p_1(x,y,z) &= 2\alpha^2x^2z-2\alpha^2 yz^2+2\alpha x^3-2\alpha xyz-2x^3+x^2y-2xz^2+y^3, \\
p_2(x,y,z) &= 2\alpha^2x^3-2\alpha^2 xyz+2\alpha x^2y-2\alpha y^2z-x^3+2x^2z-xy^2+2z^3,
\end{align*}
with $\alpha$ a root of $Z^3-2$.
Let $f(x,y,z) = p_1^2 + p_2^2 = 5x^6+12x^5y+12x^5z+3x^4y^2+12x^4z^2-4x^3y^3-36x^3y^2z-36x^3yz^2-4x^3z^3+3x^2y^4+12x^2z^4+12xy^3z^2+12xy^2z^3+y^6+4z^6 \in \Q[x,y,z]$ (see Section \ref{section:counterexample} for the details on how this example was constructed).

Let $\sigma_i$ be as in Theorem \ref{theorem:twopolys}. Recalling that $P_1$ and $P_2$ are defined so that $P_1 + I P_2 = \prod_{i=0}^{2} (\sigma_i(p_1)+I\sigma_i(p_2))$, we get
\begin{dgroup*}
\begin{dmath*}
P_1 = \sigma_0(p_1)  \sigma_1(p_1)  \sigma_2(p_1) - \sigma_0(p_1)  \sigma_1(p_2)  \sigma_2(p_2) - \sigma_0(p_2)  \sigma_1(p_1)  \sigma_2(p_2) - \sigma_0(p_2)  \sigma_1(p_2)  \sigma_2(p_1)
\end{dmath*}
\begin{dmath*}
P_2 = \sigma_0(p_2)  \sigma_1(p_1)  \sigma_2(p_1) + \sigma_0(p_1)  \sigma_1(p_2)  \sigma_2(p_1) + \sigma_0(p_1)  \sigma_1(p_1)  \sigma_2(p_2) - \sigma_0(p_2)  \sigma_1(p_2)  \sigma_2(p_2)
\end{dmath*}
\end{dgroup*}
which in this example gives

\begin{dgroup*}
\begin{dmath*}
P_1 = -10x^9-39x^8y-24x^8z-42x^7y^2-36x^7yz+6x^7z^2+4x^6y^3+72x^6y^2z+
108x^6yz^2+80x^6z^3+18x^5y^4+120x^5y^3z+126x^5y^2z^2+12x^5yz^3+48x^5z^4-
6x^4y^5-36x^4y^3z^2-240x^4y^2z^3-252x^4yz^4-24x^4z^5-6x^3y^6-36x^3y^5z-
54x^3y^4z^2-40x^3y^3z^3+64x^3z^6+84x^2y^3z^4+72x^2y^2z^5-12x^2yz^6+
18xy^6z^2+12xy^5z^3+24xz^8+y^9+4y^3z^6,
\end{dmath*}
\begin{dmath*}
P_2 = 5x^9+12x^8y+42x^8z-12x^7y^2+72x^7yz+84x^7z^2-40x^6y^3-54x^6y^2z-
36x^6yz^2+58x^6z^3-6x^5y^4-24x^5y^3z-252x^5y^2z^2-240x^5yz^3-36x^5z^4+
12x^4y^5+126x^4y^4z+120x^4y^3z^2+18x^4y^2z^3+48x^4z^5-8x^3y^6+80x^3y^3z^3+
108x^3y^2z^4+72x^3yz^5+12x^3z^6+6x^2y^6z-36x^2y^5z^2-42x^2y^4z^3-3xy^8-
24xy^3z^5-36xy^2z^6-2y^6z^3-8z^9,
\end{dmath*}
\end{dgroup*}
and we verify in Maple that $f^3 = P_1^2 + P_2^2$.

The last step is to convert the expression $f = \frac{P_1^2}{f^2} + \frac{P_2^2}{f^2}$ into a sum of squares of polynomials in $\Q[x,y,z]$, as guaranteed by Corollary \ref{theoremC}. In this case this is immediate, since $P_1$ and $P_2$ are divisible by $f$.

We get
\[
f(x,y,z) = (2x^3+3x^2y-6xz^2-y^3)^2 + (x^3+6x^2z-3xy^2-2z^3)^2.
\]

\end{example}

\section{Sum of squares of three polynomials over a field of odd degree}
\label{section:counterexample}
In this section we provide a negative answer to \cite[Question 5.3]{scheiderer} stated above for the case of a sum of squares of three polynomials over an algebraic extension of $\Q$  of odd degree (which trivially implies a negative answer for any larger number of polynomials). We exhibit a polynomial that can be decomposed as the sum of three squares of polynomials with coefficients in an algebraic extension $\Q(\alpha)$ of $\Q$ of odd degree that cannot be decomposed as sum of squares of polynomials with coefficients in $\Q$. The example also provides an affirmative answer to the first question in \cite[Question 5.1]{scheiderer}: ``Are there examples of forms in $\Q[x_1, \dots, x_n]$ that are sums of squares over $\R$, but not over $\Q$, that are irreducible over $\C$?''.

To construct the example, we first explain a method to build polynomials in $\Q[x] = \Q[x_1, \dots, x_n]$ that are sum of squares of polynomials with coefficients in an algebraic extension of $\Q$.
In \cite{hillar}, the author shows that if $\Q(\alpha)$ is a totally real extension of $\Q$ then every sum of squares with coefficients in $\Q(\alpha)$ is also a sum of squares of polynomials with coefficients in $\Q$. So, to construct a counterexample, we must look for extensions that are not totally real.
For simplicity, let $\alpha$ be a root of $Z^3-2$.

We start with three generic polynomials $p_i \in \Q(\alpha)[x]$, $1 \le i \le 3$, which we can write as
\[
p_i = a_i(x) + b_i(x) \alpha + c_i(x) \alpha^2,
\]
$a_i, b_i, c_i \in \Q[x]$.
From  the desired identity $f = p_1^2 + p_2^2 + p_3^2$, using the fact that $\alpha^3 = 2$, we derive an expression
\begin{equation}
\label{fABC}
f = A(a_1, a_2, \dots, c_3) + B(a_1, a_2, \dots, c_3) \alpha + C(a_1, a_2, \dots, c_3) \alpha^2.
\end{equation}
with
\begin{align*}
B &= 2 a_1 b_1+2 a_2 b_2+2 a_3 b_3+2 c_1^2+2 c_2^2+2 c_3^2, \\
C &= 2 a_1 c_1+2 a_2 c_2+2 a_3 c_3+b_1^2+b_2^2+b_3^2
\end{align*}
Since we want $f \in \Q[x]$, we must have $B \equiv 0$ and $C \equiv 0$. Note that the expressions for $B$ and $C$ are linear in $a_1, a_2$. If we solve the equations for these two coefficients, we get
\begin{align*}
a_1 &= \frac{2 a_3 b_2 c_3-2 a_3 b_3 c_2+b_1^2 b_2+b_2^3+b_2 b_3^2-2 c_1^2 c_2-2 c_2^3-2 c_2 c_3^2}{b_1 c_2-b_2 c_1} \\
a_2 &= -\frac{2 a_3 b_1 c_3-2 a_3 b_3 c_1+b_1^3+b_1 b_2^2+b_1 b_3^2-2 c_1^3-2 c_1 c_2^2-2 c_1 c_3^2}{b_1 c_2-b_2 c_1}
\end{align*}


For any number of polynomials over a number field of degree $3$ it is easy to derive similar rational expressions for $a_1$ and $a_2$. For larger degree extensions, the expression we can derive will involve roots of polynomials and hence the polynomials should be chosen carefully so that the expressions in the output are also rational expressions.

Plugging the expressions for $a_1$ and $a_2$ in (\ref{fABC}) and multiplying by the common denominator $b_1 c_2-b_2 c_1$, we obtain that for any $a_3, b_1, b_2, b_3, c_1, c_2, c_3 \in \Q[x]$, $(b_1 c_2-b_2 c_1)f \in \Q[x]$ is the sum of the squares of three polynomials with coefficients in $\Q(\alpha)[x]$. We call $f$ this polynomial from now on.

For our concrete example we pick arbitrary polynomials in $\Q(\alpha)[x,y,z,w]$ setting
\begin{equation}
\label{equation:subst}
b_1 = x, b_3=y, c_1=z, c_3=w, b_2 = 3x, c_2 = x+7z, a_3 = 21z.
\end{equation}

\begin{theorem}
Let
\begin{dmath*}
f(x,y,z,w)  =  4w^4x^2+56w^4xz+200w^4z^2-504w^3x^2z-3696w^3xz^2
 -  112w^2x^4-656w^2x^3z-12w^2x^2y^2+168w^2x^2yz+20008w^2x^2z^2
 -  88w^2xy^2z+2352w^2xyz^2+11200w^2xz^3
 +  8400w^2yz^3+20000w^2z^4+16wx^4y+7896wx^4z+128wx^3yz-10752wx^3z^2
 +  840wx^2y^2z-10328wx^2yz^2-76944wx^2z^3
 -  77616wxyz^3-184800wxz^4+932x^6-1656x^5z+188x^4y^2-2352x^4yz-10020x^4z^2
 -  256x^3y^2z- 13776x^3yz^2+3520x^3z^3+10x^2y^4-252x^2y^3z-68x^2y^2z^2
 +  49728x^2yz^3+175824x^2z^4-1848xy^3z^2+20296xy^2z^3
 +  235200xyz^4+420000xz^5+88200y^2z^4+420000yz^5+500000z^6 \in \Q[x,y,z,w].
\end{dmath*}
The polynomial $f$ can be decomposed as a sum of squares of three polynomials in $\Q(\alpha)[x,y,z,w]$, $\alpha$ a root of $Z^3-2$, but it cannot be decomposed as a sum of squares of polynomials in $\Q[x,y,z,w]$. Moreover, $f$ is irreducible over $\C$.
\end{theorem}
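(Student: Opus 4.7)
The theorem has three assertions, which I treat in increasing order of difficulty.

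\textbf{The $\Q(\alpha)$-SOS decomposition and irreducibility.} The first claim is immediate from the construction: substituting (\ref{equation:subst}) into the closed-form expressions for $a_1, a_2$ derived earlier yields explicit polynomials $p_1, p_2, p_3 \in \Q(\alpha)[x,y,z,w]$ (after absorbing the denominator $b_1 c_2 - b_2 c_1 = x^2 + 4xz$ into $f$ as in the construction), and a direct symbolic expansion verifies $f = p_1^2 + p_2^2 + p_3^2$. For irreducibility over $\C$, since $f$ is homogeneous of degree $6$ in four variables I would use a hyperplane-section argument: the restriction $f(x,y,z,0) \in \Q[x,y,z]$ is a plane sextic, and if it is absolutely irreducible, then so is $f$, because any nontrivial homogeneous factorization of $f$ would descend to a nontrivial factorization of the restriction (which is nonzero since $f$ contains the pure monomial $932\,x^6$). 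Absolute irreducibility of $f(x,y,z,0)$ can then be certified with an absolute factorization routine in Maple or Singular.

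\textbf{The non-existence of a $\Q$-SOS.} This is the substantive part. The plan is to apply the facial reduction of Section \ref{section:facialreduction}, using \emph{only} the provably necessary reductions: the kernel-trace conditions from real algebraic zeros of $f$ (the trace proposition in Section \ref{section:facialreduction}), the null-diagonal equations derived from Proposition \ref{prop:nulleq}, and the ghost-solution equations from $2\times 2$ principal minors of vanishing determinant. The heuristic of Subsection \ref{subsection:rational} is pointedly \emph{not} used, since it can cut off valid rational solutions. Concretely, I would enumerate the real zeros of $f$ by solving $\{f = 0\} \cup \{\partial f/\partial x_i = 0\}_i$ symbolically in Maple and decomposing into algebraic branches. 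For each branch $\omega$ whose coordinates lie in some $\Q(\beta)$, the vector $\Tr_{\Q(\beta)/\Q}\,v(\omega) \in \Q^{m_d}$ must lie in the kernel of every $Q \in \eLL_\Q \cap \Sym_0^m$, yielding linear restrictions on a parametrization $W(T_1, \ldots, T_r)$ of $\eLL$. Iterating these together with the ghost-solution steps produces a reduced parametrization $W'$ whose specializations contain every rational positive semidefinite element of $\eLL$.

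The \textbf{main obstacle} is then to certify that no specialization of $W'$ is simultaneously rational and positive semidefinite. The natural route is to analyze the reduced Gram spectrahedron symbolically: beyond the already-imposed rank reductions, positive semidefiniteness translates into the vanishing of further principal minors, giving polynomial equations in the residual parameters $T_i$. By the construction of $f$, I expect the real variety defined by these equations to lie entirely in the Galois orbit of the $\Q(\alpha)$-SOS decomposition, and hence to avoid $\Q$; concretely, the minimal polynomial $Z^3 - 2$ of $\alpha$ should appear as a factor of a relevant resultant, forcing at least one $T_i$ to satisfy an irreducible polynomial of degree $>1$ over $\Q$. Producing such a clean algebraic obstruction is the delicate step; if the symbolic analysis proves unwieldy, one fallback is a dual certificate---a separating hyperplane between the rational affine space of the reduced parametrization and $\Sym_0^m$, constructed via a Farkas-type lemma for semidefinite programs.
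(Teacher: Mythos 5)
Your strategy coincides with the paper's: the $\Q(\alpha)$-decomposition follows from the construction, absolute irreducibility is certified computationally, and the rational SOS is ruled out by facial reduction using only provably necessary conditions (kernel/trace conditions from real zeros, null diagonal entries, vanishing $2\times 2$ principal minors), deliberately avoiding the lossy heuristic of Section \ref{subsection:rational}. The one substantive divergence is the endgame. You anticipate residual parameters in the reduced parametrization and identify as the ``main obstacle'' the task of showing that no rational positive semidefinite specialization survives, proposing a minor/resultant analysis or an SDP separation certificate. In the actual example that step never arises: the paper's chain of necessary linear conditions --- four branches of real zeros (obtained only after augmenting the critical-point system with $p_1=p_2=p_3=0$, since Maple cannot solve $\{f=0,\nabla f=0\}$ directly; this practical trick is missing from your plan), together with the ghost-solution equations --- cuts $\eLL$ down to a \emph{single} matrix, namely the non-rational Gram matrix of the original $\Q(\alpha)$-decomposition, and Lemma \ref{SOS-SDP} finishes the argument. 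So your primary plan would succeed as written, but the contingency machinery is unnecessary and is also the weakest part of the proposal: a ``Farkas-type'' separating functional for a feasible but not strictly feasible SDP is precisely the kind of certificate that is delicate to produce and would require its own justification. Your hyperplane-section argument for irreducibility is sound (each homogeneous factor of $f$ restricts at $w=0$ to a nonzero form of the same positive degree because $f|_{w=0}\neq 0$), though it is conditional on the restriction being irreducible; the paper simply tests $f$ itself.
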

\begin{proof}
(See \cite[Worksheet D]{rationalSOS} for the computations in Maple.)
This polynomial was computed as explained above, using the substitutions in (\ref{equation:subst}), and it is therefore a sum of squares of three polynomials in $\Q(\alpha)[x,y,z,w]$.

To prove that it cannot be written as a sum of rational squares, we can apply the implementation in Maple of  Algorithm \ref{mainAlgorithm} (see \cite[Example 3]{rationalSOS}), which outputs that the only positive semidefinite solution to the problem has non-rational entries. For convenience, we provide next a step by step proof that the only solution for $Q$ in (\ref{SDP}) being positive semidefinite is the one corresponding to the solution in $\Q(\alpha)[x,y,z,w]$ described above.

The corresponding matrix $Q$ is a $20 \times 20$ matrix with rows and columns indexed by the monomials $v(x,y,z,w) = (x^3, x^2 y, x y^2, y^3, x^2 z, x y z, y^2 z, x z^2, y z^2, z^3, x^2 w, x y w, \allowbreak y^2 w, x z w, y z w, z^2 w, x w^2, y w^2, z w^2, w^3)$ . The space $\eLL$ has dimension 126 and rank 20.

We first find real roots of $f$. In this case, equating $f$ and its partial derivatives to $0$ we get a system that cannot be solved by Maple command \texttt{solve}. We add the equations $\{p_1 =0, p_2 = 0, p_3 = 0\}$, which must also be verified by all real solutions, and for the new system we get 4 branches of solutions:
\begin{flalign*}
\left\{x = s, y = \frac{\alpha(s,t)}{2}, z = -\frac{s}{4}, w = t\right\}, \\
\left\{x = 0, y = s, z = t, w = \beta(s,t)\right\}, \\
\left\{x = 0, y = s, z = 0, w = t\right\}, \\
\left\{x = s, y = -\frac{(100\delta^2\xi^4-441\delta^2\xi^3+28\delta\xi^4+2\xi^4+1764\delta^2-20\xi^2)s}{84\delta\xi}, \right. \\
\left. z = \delta s, w = \frac{(100\delta^2\xi^2-441\delta^2\xi+28\delta\xi^2+2\xi^2-20)s}{84\delta}\right\},
\end{flalign*}
where $\alpha(s,t)$ is a root of $m_1(Z)=4Z^2-21sZ+8t^2-168st+165s^2$, $\beta(s,t)$ is a root of $m_2(Z)=Z^2+21uv+50t^2$, $\xi$ is a root of $m_3(Z)=Z^3-2$  and $\delta$ is a root of a polynomial $m_4(Z)$ of degree $12$ with 2 real roots. Note that the first and second branches involve algebraic extensions that depend on the choices of the parameters, while the third and fourth equation do not.

We start with the third branch. By giving different values to $s$ and $t$ we obtain different vectors that must be in the kernel of $\eLL$. Note that there are 4 different monomials in $v(0,s,0,t) = (0, 0, 0, s^3, 0, 0, 0, 0, 0, 0, 0, 0, s^2t, 0, 0, 0, 0, st^2, 0, t^3)$, so by giving different values to $s$ and $t$ we can get at most 4 independent vectors. We use the following choices $\{s = 1, t = 1\}$, $\{s = 1, t = 0\}$, $\{s = 0, t = 1\}$, $\{s = -1, t = 1\}$. After solving the corresponding equations, we get $\eLL_1 \subset \eLL$ of dimension 71 and rank 16.

Next, we plugin the equations corresponding to the ghost solutions of zeros in the diagonal and $2 \times 2$ principal submatrices with null determinant. We reduce to $\eLL_2 \subset \eLL_1$ of dimension 29 variables and rank 12.

Now we use the first branch. We have to choose real values of $s$ and $t$ that give real solutions, that is, that the polynomial $4Z^2-21sZ+165s^2-168st+8t^2$ has real roots. We choose $\{s=1, t= 1\}$, $\{s=1, t = 2\}$, $\{s=1, t = 3\}$. This step reduces the space to $\eLL_3 \subset \eLL_2$ of dimension 8 and rank 9. Other choices of $s$ and $t$ tested did not reduce further the dimension.

For the second branch, we take $\{s=-3, t = 1\}$, and reduce the space to $\eLL_4 \subset \eLL_3$ of dimension 6 and rank 8. Other choices of $s$ and $t$ tested did not reduce further the dimension.

Plugging again the equations corresponding to ghost solutions from null entries in the diagonal, we obtain $\eLL_5 \subset \eLL_4$ of dimension 3 and rank 7.

Finally we use the fourth branch, which depends only on $s$. We take $s = 1$. After plugging in this solution, the system is completely solved and an exact solution is found. The solution has non-rational entries, and corresponds to the same matrix that is obtained from the original $\R$-SOS decomposition. By Lemma \ref{SOS-SDP}, we conclude that $f$ does not allow a $\Q$-SOS decomposition.

We verify that $f$ is absolutely irreducible (that is, irreducible over $\C$) using Maple procedure \texttt{evala(AIrreduc(f))}.
\end{proof}

\begin{remark}
   To prove the theorem we used only exact computations, we did not in this case need to call the numerical solver SEDUMI. The zeros of $f$ found by Maple can be easily verified to be correct by plugging them in the polynomial $f$.
\end{remark}

\appendix
\section{\texttt{rationalSOS} package usage}
\label{appendix:usage}
For the computations in this work, we developed a Maple package \texttt{rationalSOS} \cite{rationalSOS}, which is freely available for download and use. The main procedure for computing an exact SOS decomposition is \texttt{exactSOS}. We provide in this Appendix some examples of usage.

\subsection*{Example A.1}
To compute a decomposition of the polynomial given in Example \ref{exampleXYZ}, we use the following code (we assume the file \texttt{rationalSOS.mpl} is in the current working directory):

\begin{verbatim}
read("rationalSOS.mpl");
with(rationalSOS);
p1 := x^2 + 3*x*y - 5*x*z + 2z^2;
p2 := 3x^2 - 2*x*z + y*z + 5*y^2;
f := expand(p1^2 + p2^2);
out := exactSOS(f);
--> "Facial reduction results:"
--> "Original matrix"
--> " - Rank: ", 6,
--> " - Number of indeterminates: ", 6
--> "Matrix after facial reduction"
--> " - Rank: ", 2
--> " - Number of indeterminates: ", 0
--> "An exact solution was found without calling the numerical
    solver. The solution matrix is unique under the specified
    conditions."
out[1];
--> [10, 81/10]
out[2];
--> [x^2+(3/10)*x*y-(11/10)*x*z+(3/2)*y^2+(3/10)*y*z+(1/5)*z^2,
    x*y-(13/9)*x*z-(5/9)*y^2-(1/9)*y*z+(2/3)*z^2]
\end{verbatim}

The first element of the output is the list of coefficients of the polynomials in the rational SOS decomposition and the second element is the list of polynomials. In this case, the decomposition consists of two polynomials. We can verify the solution:
\begin{verbatim}
p := out[1][1]*out[2][1]^2+out[1][2]*out[2][2]^2;
expand(p);
--> 10*x^4+6*x^3*y-22*x^3*z+39*x^2*y^2-24*x^2*y*z+33*x^2*z^2-
    20*x*y^2*z+8*x*y*z^2-20*x*z^3+25*y^4+10*y^3*z+y^2*z^2+4*z^4
expand(f-p);
--> 0
\end{verbatim}

The third element in the output (not printed) is a positive semidefinite matrix satisfying (\ref{SDP}). In this case, the decomposition was obtained by purely algebraic methods, by first computing an exact real solution of $f = 0$. This implies that the matrix is the only rational matrix satisfying (\ref{SDP}).

\subsection*{Example A.2}
To compute a decomposition of the polynomial given in Example \ref{exampleB}, we use option \texttt{forceRational = "yes"} that implements the strategy explained in Section \ref{subsection:rational}. 

\begin{verbatim}
read("rationalSOS.mpl");
with(rationalSOS);
f(x,y,z) = 3618*x^8 + 468*x^7*y + 6504*x^7*z - 1104*x^6*y^2
+ 2616*x^6*y*z + 57481*x^6*z^2 - 144*x^5*y^3 - 1652*x^5*y^2*z
- 16440*x^5*y*z^2 + 23420*x^5*z^3 + 160*x^4*y^4 + 1392*x^4*y^3*z
- 2520*x^4*y^2*z^2 - 28448*x^4*y*z^3 + 91320*x^4*z^4 - 240*x^3*y^4*z
+ 1728*x^3*y^3*z^2 + 10524*x^3*y^2*z^3 - 85500*x^3*y*z^4
+ 34740*x^3*z^5 - 3696*x^2*y^3*z^3 + 28920*x^2*y^2*z^4
- 15192*x^2*y*z^5 - 57267*x^2*z^6 + 720*x*y^4*z^3 - 3312*x*y^3*z^4
- 3168*x*y^2*z^5 + 26352*x*y*z^6 - 40176*x*z^7 + 720*y^4*z^4
+ 864*y^3*z^5 - 9072*y^2*z^6 + 46656*z^8
out := exactSOS(f, forceRational = "yes");
--> "Facial reduction results:"
--> "Original matrix"
--> " - Rank: ", 15, "
--> " - Number of indeterminates: ", 75
--> "Matrix after facial reduction"
--> " - Rank: ", 6
--> " - Number of indeterminates: ", 4
--> "Calling numerical solver SEDUMI to find the values of the
    remaining indeterminates..."
--> "Problem solved. Positive definite matrix found for the reduced
    problem."
f - expand(add(i, i = out[1]*~out[2]*~out[2]));
--> 0
\end{verbatim}

The command \verb!add(i, i = out[1]*~out[2]*~out[2])! computes the sum of squares obtained by the procedure $\sum_{i=1}^6 c_i g_i^2$ and we verify that it is equal to $f$.

\subsection*{Example A.3}
We can easily verify that the Motzkin polynomial does not allow a SOS decomposition. Since we are looking for any real SOS decomposition and trace equations are only valid when looking for rational SOS, we set option \texttt{traceEquations = "no"}.

\begin{verbatim}
read("rationalSOS.mpl");
with(rationalSOS);
f := x^4*y^2 + x^2*y^4 - 3*x^2*y^2*z^2+z^6;
exactSOS(f, traceEquations = "no");
--> "Facial reduction results:"
--> "Original matrix"
--> " - Rank: ", 10
--> " - Number of indeterminates: ", 27
--> "Matrix after facial reduction"
--> " - Rank: ", 4
--> " - Number of indeterminates: ", 0
--> "An exact solution was found without calling the numerical
    solver. The solution matrix is unique under the specified
    conditions."
--> "The solution is not positive semidefinite. A SOS decomposition
    does not exist under the specified conditions."
\end{verbatim}

\subsection*{Example A.4}
We can also verify that the concrete example given in \cite[Section 2]{scheiderer} does not allow a rational SOS decomposition.

\begin{verbatim}
read("rationalSOS.mpl");
with(rationalSOS);
f:= x^4+x*y^3+y^4-3*x^2*y*z-4*x*y^2*z+2*x^2*z^2+x*z^3+y*z^3+z^4;
exactSOS(f);
--> "Option eqTrace: yes - Only valid when looking for rational
    decompositions."
--> "Facial reduction results:"
--> "Original matrix"
--> " - Rank: ", 6, "
--> " - Number of indeterminates: ", 6
--> "Matrix after facial reduction"
--> " - Rank: ", 5
--> " - Number of indeterminates: ", 0
--> "An exact solution was found without calling the numerical
--> solver. The solution matrix is unique under the specified
    conditions."
--> "The solution is not positive semidefinite. A SOS decomposition
    does not exist under the specified conditions."
\end{verbatim}

\section*{Acknowledgments}
The author would like to thank Gabriela Jeronimo and Daniel Perrucci for many fruitful discussions.

\bibliographystyle{amsplain}
\bibliography{rationalSOS}

\end{document}